\newtheorem{teo}{Theorem}[section]
\newtheorem{lema}[teo]{Lemma}
\newtheorem{prop}[teo]{Proposition}
\theoremstyle{definition}
\numberwithin{equation}{section}
\begin{document}

\newcommand{\cc}{\mathfrak{c}}
\newcommand{\pp}{\mathfrak{p}}
\newcommand{\N}{\mathbb{N}}

\title{On the density of Banach $C(K)$ spaces with the Grothendieck property}
\author{Christina Brech}
\address{Departamento de  
Matem\'atica - Instituto de Matem\'atica e Estat\'\i stica - Universidade de 
S\~ao Paulo, Caixa Postal 66281 - CEP: 05315-970 - S\~ao Paulo, Brasil}
\email{kika@ime.usp.br}
\thanks{The author was supported by scholarship from FAPESP (02/04531-6). This
  paper is part of the author's M.A. thesis at the University of S\~ao
  Paulo, under the guidance of Prof. Piotr Koszmider. I wish to thank him
for the guidance and for the assistance during the preparation of this
paper.}
%
\subjclass{Primary 03E35; Secondary 46B20}
%
%
%
\begin{abstract} 
Using the method of forcing we prove that consistently there is a Banach space
of continuous functions on a compact Hausdorff space with the Grothendieck
property and with density less than the continuum. It follows that the
classical result stating that ``no nontrivial complemented subspace of a
Grothendieck $C(K)$ space is separable'' cannot be strengthened by replacing
``is separable'' by ``has density less than that of $l_\infty$'', without
using an additional set-theoretic assumption. Such a strengthening was proved
by Haydon, Levy and Odell, assuming Martin's axiom and the negation of the
continuum hypothesis. Moreover, our example shows that certain separation
properties of Boolean algebras are quite far from the Grothendieck property.
\end{abstract}

\maketitle

\section{Introduction}

For an infinite compact Hausdorff space $K$, let $C(K)$ be the Banach space of
the continuous real-valued functions on $K$, with the supremum norm. The
purpose of this work is to show that the existence of a Grothendieck $C(K)$
space with density less than the continuum (denoted by $\cc$) is independent
from the usual axioms of set theory. Recall that a Banach space $X$ is said to
be a Grothendieck space (see \cite{DiestelArtigo} for more details) whenever
each weak$^*$ convergent sequence in its dual $X^*$ converges weakly. To
obtain this independence result we make the following two assertions:
\[
\begin{array}{c}
\textit{If }\pp=\cc, \textit{ then every Grothendieck }
C(K)\textit{ space has density }\geq \cc.
\end{array}\eqno{(\mathrm{I})}
\]
\[
\begin{array}{c}
\textit{In a model obtained by forcing, there is a Grothendieck}\\
C(K)\textit{ space with density }< \cc.
\end{array}\eqno{(\mathrm{II})} 
\]
The main purpose of this work is to prove $(\mathrm{II})$, since
$(\mathrm{I})$ is already known: it follows from a result of
\cite{HaydonLevyOdell}. To present here a direct proof of $(\mathrm{I})$, we
define the cardinal $\pp$: we call $\pp$ the least infinite cardinal $\kappa$
for which there exists $(M_\alpha)_{\alpha<\kappa} \subseteq \wp(\N)$ such
that $\bigcap_{\alpha \in F}M_\alpha$ is infinite for all finite subsets $F$
of $\kappa$ and there is no infinite $M \subseteq \N$ such that $|M\setminus
M_\alpha| < \infty$ for all $\alpha <\kappa$. This means that we can in some
way diagonalize less than $\pp$ subsets of $\N$ which are finitely
compatible. It is known that $\omega_1 \leq \pp \leq \cc$ ($\omega_1$ is the
first uncountable cardinal) and that MA (Martin's axiom) implies that
$\pp=\cc$. We have that every infinite compact Hausdorff space with weight
less than $\pp$ has a nontrivial convergent sequence (see \cite{Fremlin},
Proposition 24 A) and therefore $C(K)$ is not a Grothendieck space (see the
proof of Theorem 9.5 of \cite{Mascioni}). So, assuming $\pp=\cc$ we have that
if $C(K)$ is a Grothendieck space, then $K$ has weight at least $\cc$ and by
the Stone-Weierstrass Theorem, $C(K)$ has density at least $\cc$.

It follows from a result of \cite{DiestelArtigo} that no nontrivial
complemented subspace of a Grothendieck $C(K)$ space is separable. A
strengthening of this statement follows from a result of
\cite{HaydonLevyOdell}, assuming MA (or simply $\pp=\cc$) and the negation of
CH (continuum hypothesis): each nontrivial complemented subspace of a
nonreflexive Grothendieck space (hence each nontrivial complemented subspace
of a Grothendieck $C(K)$ space) has density at least $\cc$. Our result shows
that we need an additional set-theoretic assumption to prove such
strengthening.

Pe\l czy\' nski asked (see \cite{Haydon}) if every Banach space of continuous
functions should contain either a complemented copy of $c_0$ or a
(complemented) copy of $l_\infty$. Talagrand (assuming CH, see
\cite{Talagrand}) and Haydon (without any additional hypothesis, see
\cite{Haydon}) answered this question negatively. Moreover, the space
constructed by Talagrand does not have a quotient space isomorphic to
$l_\infty$. On the other hand, Haydon, Levy and Odell proved in
\cite{HaydonLevyOdell} that $\pp=\cc$ and the negation of CH imply that every
Grothendieck $C(K)$ space has $l_\infty$ as a quotient. Our space has stronger
properties than that constructed by Talagrand: it is a Grothendieck $C(K)$
space or, equivalently, a Banach $C(K)$ space with no complemented copies of
$c_0$ (by a result of \cite{Schachermayer}) with density less than $\cc$,
which is the density of $l_\infty$ (and therefore it has no quotient
isomorphic to $l_\infty$).

Turning to properties of Boolean algebras, we would like to notice that there
are many of them which imply that $C(K)$ has the Grothendieck property, for
$K$ its Stone space. Some of them are the subsequential completeness property
(see \cite{Haydon}), subsequential interpolation property (see
\cite{Freniche}), etc. However, all of them imply also that the Boolean
algebra has cardinality at least $\cc$, which is not the case of ours. So, our
space is a Grothendieck $C(K)$ space, for $K$ the Stone space of a Boolean
algebra, which does not have such properties. This illustrates that these
properties are quite far from the Grothendieck property.

To show $(\mathrm{II})$ we will not make use of well-known axioms like CH or
$\pp=\cc$, as occurs in the results of Talagrand and Haydon, Levy and
Odell. Instead, we shall prove the consistency directly by forcing\footnote{A
  classical example of the use of forcing to obtain a result in analysis is
  the proof of the consistency of the automatic continuity of homomorphisms
  between Banach algebras (see \cite{DalesWoodin}).}. Using a product of Sacks
forcings (also known as the perfect set forcing) we obtain the model in which
there is a Grothendieck $C(K)$ space with density less than $\cc$. It would be
interesting to decide if axioms like $\pp<\cc$ or alike imply $(\mathrm{II})$
directly. Other applications of the Sacks forcing in analysis can be found in
\cite{CiesielskiPawlikowski} and \cite{Steprans}.

The idea of showing $(\mathrm{II})$ was motivated by a result of
\cite{JustPiotr}. In this work, Just and Koszmider showed that a certain
compact Hausdorff space $K$ with weight less than $\cc$ (and so, $C(K)$ has
density less than $\cc$) has no nontrivial convergent sequences. Although this
is not sufficient for $C(K)$ to be a Grothendieck space, it is
necessary. Thus, generalizing and modifying the methods used in
\cite{JustPiotr} we prove that $C(K)$ is a Grothendieck space. Moreover,
Schachermayer proved (see \cite{Schachermayer}) that a necessary (but not
sufficient) condition for a Boolean algebra to have the Grothendieck property
(that is, for $C(K)$ to be a Grothendieck space, where $K$ is its Stone space)
is that it is not a countable union of a strictly increasing sequence of
subalgebras. One of the results of \cite{JustPiotr} (which follows also
combining ours and that of \cite{Schachermayer}) is that our Boolean algebra
is not such a union.

In this paper, $\mathcal{B}$ denotes an infinite Boolean algebra and
$S(\mathcal{B})$ its Stone space. We use $\bigvee_{i\in I}b_i$ for the
supremum of the family $(b_i:i\in I) \subseteq \mathcal{B}$, if it exists and
we say that a family $A \subseteq \mathcal{B}$ is an antichain in
$\mathcal{B}$ if for each $a, b \in A$ with $a \neq b$ we have that $a \cdot b
= 0$. We denote by $\mu$ a real-valued finitely additive measure on
$\mathcal{B}$ and if $\mu$ is bounded, $|\mu|$ denotes the variation of
$\mu$. $K$ will always denote an infinite compact Hausdorff space and $Bor(K)$
the $\sigma$-algebra of its Borel sets. A Radon measure $\mu$ on $K$ is a
real-valued $\sigma$-additive bounded regular measure on $Bor(K)$ and $\Vert
\mu \Vert$ denotes its norm (see \cite{Schachermayer} and \cite{Semadeni} for
the definitions).

Let $\mathcal{B}$ be a Boolean algebra. In what follows we will identify the
Boolean algebra $\mathcal{B}$ with the Boolean algebra $Clop(S(\mathcal{B}))$
of the closed and open subsets of $S(\mathcal{B})$, using the Stone
duality. Recall that given a Radon measure $\mu$ on $K$, $\mu|_{\mathcal{B}}$
is a finitely additive measure on $\mathcal{B}$. On the other hand, if $\mu$
is a finitely additive measure on $\mathcal{B}$, then there is a unique Radon
measure $\tilde{\mu}$ on $K$ such that $\tilde{\mu}|_{\mathcal{B}}
=\mu$. Thus, we have a correspondence between finitely additive measures on
$\mathcal{B}$ and Radon measures on $K$ and we will identify them in the
sequel. Recall that the Riesz Representation Theorem guarantees that
$C(S(\mathcal{B}))^*$ (the dual space of $C(S(\mathcal{B}))$) is isometric to
the space of the Radon measures on $S(\mathcal{B})$. Hence we identify also
each Radon measure on $S(\mathcal{B})$ (and thus, each finitely additive
measure on $\mathcal{B}$) with the correspondent functional in
$C(S(\mathcal{B}))^*$.

We use a standard terminology for the Sacks forcing: we denote by $\mathbb{S}$
the Sacks forcing and given $s \in \mathbb{S}$ and $p \in s$, let $s|p = \{q
\in s: q\subseteq p \text{ or } p \subseteq q\} \in \mathbb{S}$. We denote by
$l(n,s)$ the $n$th forking level of $s\in \mathbb{S}$ and we say that $s
\leq_n t$ if $s \leq t$ and $l(n,s)=l(n,t)$ (see \cite{BaumgartnerSacks} for
the definitions).

Given a regular cardinal $\kappa$, we denote by $\mathbb{S}^\kappa$ the
product of $\kappa$ Sacks forcings and given $s \in \mathbb{S}^\kappa$, a
finite subset $F$ of $dom(s)$ and $n \in \N$, we denote by $l(F,n,s)$ the set
$\{\sigma: dom(\sigma) = F \text{ and for all }\alpha \in F, \sigma(\alpha)
\in l(n,s(\alpha))\}$. We say that $s \leq_{F,n} t$ if $s \leq t$ and
$l(F,n,s)=l(F,n,t)$. Finally, if $s \in \mathbb{S}^\kappa$, if $F$ is a finite
subset of $dom(s)$ and if $\sigma$ is a function with domain $F$ such that
$\sigma(\alpha) \in s(\alpha)$ for all $\alpha \in F$, let $s|\sigma \in
\mathbb{S}^\kappa$ be such that $(s|\sigma)(\alpha)=s(\alpha)$ for $\alpha \in
dom(s) \setminus F$ and $(s|\sigma)(\alpha)=s(\alpha)|\sigma(\alpha)$ for
$\alpha \in F$. We will need some results about this forcing, which are all
proved in \cite{BaumgartnerSacks}.

In Section 2 we present some combinatorial results needed for the proof of
$(\mathrm{II})$ and in Section 3 we present the proof of the main result. The
notation and terminology used are those of \cite{Diestel} for Grothendieck
spaces and those of \cite{BaumgartnerSacks} for Sacks forcing.

\section{Some combinatorial results}

In this section we present some combinatorial results, which will be necessary
in the proof of the main theorem. The following lemma is implicit in
\cite{Rosenthal1}.

\begin{lema}\label{combRosenthal}
Let $\mathcal{B}$ be a Boolean algebra and let $\kappa$ be an uncountable
cardinal. Let $(a_n :n \in \N)$ be an antichain in $\mathcal{B}$ and let
$(\mu_k: k \in \N)$ be a sequence in $C(S(\mathcal{B}))^*$. If $(N_\xi:\xi <
\kappa)$ is an almost disjoint family in $\wp(\N)$ (that is, a family of
infinite subsets of $\N$ such that for all $\xi<\xi'<\kappa$, $N_\xi \cap
N_{\xi'}$ is finite), then for all but countably many $\xi$'s we have that for
all $k \in \N$ and all $M \subseteq N_\xi$, if $\bigvee_{n \in M}a_n$ exists,
then
$$\mu_k(\bigvee_{n \in M}a_n)=\sum_{n \in M}\mu_k(a_n).$$
\end{lema}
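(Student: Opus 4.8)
The plan is to argue by contradiction. Suppose the conclusion fails; then there are uncountably many $\xi < \kappa$ for which one can find $k_\xi \in \N$ and $M_\xi \subseteq N_\xi$ such that $\bigvee_{n \in M_\xi} a_n$ exists but $\mu_{k_\xi}(\bigvee_{n \in M_\xi} a_n) \neq \sum_{n \in M_\xi} \mu_{k_\xi}(a_n)$. By passing to an uncountable subfamily I may assume $k_\xi = k$ is fixed, and (shrinking the discrepancy to a rational lower bound) that $|\mu_k(\bigvee_{n \in M_\xi} a_n) - \sum_{n \in M_\xi} \mu_k(a_n)| \geq \varepsilon$ for some fixed $\varepsilon > 0$ and all $\xi$ in an uncountable set $I$. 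Since $\mu = \mu_k$ is a bounded (Radon) measure, $\sum_{n \in \N} |\mu(a_n)| < \infty$ because the $a_n$ form an antichain; in particular, for each $\xi$ there is a finite $M_\xi^0 \subseteq M_\xi$ with $\sum_{n \in M_\xi \setminus M_\xi^0} |\mu(a_n)| < \varepsilon/3$. Writing $b_\xi = \bigvee_{n \in M_\xi} a_n$ and $b_\xi^0 = \bigvee_{n \in M_\xi^0} a_n$ (a finite join, so it exists and $\mu(b_\xi^0) = \sum_{n \in M_\xi^0}\mu(a_n)$), the element $c_\xi = b_\xi \setminus b_\xi^0 = \bigvee_{n \in M_\xi \setminus M_\xi^0} a_n$ satisfies $|\mu(c_\xi)| \geq \varepsilon/3$ while $\sum_{n \in M_\xi \setminus M_\xi^0} |\mu(a_n)| < \varepsilon/3$; so the "mass" $\mu$ puts on $c_\xi$ is not accounted for by the $a_n$'s inside it.

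The heart of the argument is now an almost-disjointness/exhaustion contradiction. Since $\mu$ is bounded, only countably many of the $c_\xi$ can have $|\mu|(c_\xi) \geq \varepsilon/3$ \emph{while being pairwise disjoint} — but the $c_\xi$ need not be disjoint, so instead I use the almost disjointness of the $N_\xi$. Fix countably many indices $\xi_1 < \xi_2 < \cdots$ in $I$; for $i < j$, $N_{\xi_i} \cap N_{\xi_j}$ is finite, hence $M_{\xi_i} \cap M_{\xi_j}$ is finite, so $c_{\xi_i} \cdot c_{\xi_j} = \bigvee\{a_n : n \in (M_{\xi_i}\setminus M_{\xi_i}^0) \cap (M_{\xi_j} \setminus M_{\xi_j}^0)\}$ is a \emph{finite} join of $a_n$'s, and by enlarging each $M_{\xi_i}^0$ (only countably many conditions, still keeping the tails small — this is where one must be slightly careful and do it recursively) one can arrange that $c_{\xi_i} \cdot c_{\xi_j} = 0$ for all $i \neq j$ among the chosen indices. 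Then $(c_{\xi_i})_{i \in \N}$ is an antichain with $|\mu|(c_{\xi_i}) \geq \varepsilon/3$ for every $i$, whence $\sum_i |\mu|(c_{\xi_i}) = \infty$, contradicting $\|\mu\| < \infty$.

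The main obstacle is the bookkeeping in the recursive construction of the finite sets $M_{\xi_i}^0$: one processes the chosen countable subfamily one index at a time, and when handling $\xi_i$ one must absorb into $M_{\xi_i}^0$ all the (finitely many, by almost disjointness) indices it shares with each of the already-processed $M_{\xi_1}, \dots, M_{\xi_{i-1}}$, \emph{and also} go back and, if necessary, note that this does not disturb the earlier tails — in fact it is cleanest to fix, for each $i$, the finite set $M_{\xi_i}^0 \supseteq (M_{\xi_i} \cap M_{\xi_1}) \cup \cdots \cup (M_{\xi_i} \cap M_{\xi_{i-1}})$ large enough that $\sum_{n \in M_{\xi_i}\setminus M_{\xi_i}^0}|\mu(a_n)| < \varepsilon/3$, which is possible since that first union is finite and the full sum $\sum_n |\mu(a_n)|$ converges. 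With that in hand the disjointness of the $c_{\xi_i}$ and the final summability contradiction are immediate. Everything else — the reduction to fixed $k$ and fixed $\varepsilon$, and the convergence $\sum_n |\mu(a_n)| < \infty$ for an antichain under a bounded measure — is routine.
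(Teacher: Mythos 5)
Your proof is correct and follows essentially the same route as the paper's: argue by contradiction, reduce to a fixed $k$ and a uniform defect $\varepsilon$, discard a finite initial piece of each $M_\xi$ so that the remaining joins become pairwise disjoint (via almost disjointness) while still carrying measure bounded away from $0$, and contradict the boundedness of $\mu_k$. The only cosmetic difference is that the paper gets its contradiction from finitely many (more than $m\cdot\Vert\mu_k\Vert$) same-sign defects rather than your countable variation-measure bookkeeping; both versions also rely on the same unstated (true, but worth a line) fact that suprema of subfamilies of an antichain indexed by disjoint sets are themselves disjoint.
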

\begin{proof}
Suppose that the lemma does not hold. Then there is an uncountable $X
\subseteq \kappa$ such that for each $\xi \in X$ there is $k_\xi \in \N$ and
an infinite set $M_\xi \subseteq N_\xi$ such that $\bigvee_{i \in M_\xi}a_i$
exists and $\sum_{i \in M_\xi} \mu_{k_\xi}(a_i) \neq \mu_{k_\xi}(\bigvee_{i
  \in M_\xi}a_i)$. We can assume without loss of generality that there are
natural numbers $k$ and $m$ such that for all $\xi \in X$,
$$|\mu_{k}(\bigvee_{i \in M_\xi}a_i)-\sum_{i \in M_\xi}\mu_{k}(a_i)|>
\frac{1}{m}.$$

Fix $\xi \in X$. Let $\delta_\xi = \mu_{k}(\bigvee_{i \in M_\xi}a_i)-\sum_{i
  \in M_\xi} \mu_{k}(a_i)$. Since $\mu_k(\bigvee_{i \in M_\xi, i >l}a_i)$
converges to $\delta_\xi$ as $l \rightarrow \infty$, there are arbitrarily
large $l \in \N$ such that
\[|\mu_k(\bigvee_{i\in M_\xi,i>l}a_i)-\delta_\xi|<|\delta_\xi|-
  \frac{1}{m}. \eqno{(*)}\] 

Let $n$ be a natural number greater than $m \cdot \Vert \mu_k\Vert$ and let
$\xi_1,\dots,\xi_n$ be different ordinals in $X$ such that $\delta_{\xi_j}$
are either all positive or all negative. For each $1 \leq j \leq n$, let $l_j$
be a natural number as in $(*)$ and such that $(M_{\xi_j} \setminus \{0,
\dots, l_j\})_{1 \leq j \leq n}$ are pairwise disjoint. Note that $(*)$
implies that $\mu_k(\bigvee_{i \in M_{\xi_j}, i>l_j}a_i )$ are also either all
positive or all negative and that $|\mu_k(\bigvee_{i \in M_{\xi_j}, i>l_j}a_i
)| > \frac{1}{m}$. Then,
$$|\mu_k(\bigvee_{1 \leq j \leq n} (\bigvee_{i \in M_{\xi_j}, i > l_j}a_i))| =
\sum_{j=1}^n |\mu_k(\bigvee_{i \in M_{\xi_j}, i>l_j}a_i )| \geq n \cdot
\frac{1}{m} > \Vert \mu_k\Vert,$$
a contradiction. Therefore the lemma is true.
\end{proof}

\begin{lema}\label{combinatorio}
Let $m$, $A$ and $N$ be natural numbers. For each $i<N$, let $G_i$ be a finite
subset of $\N$ with cardinality at least $A+m$. Fix an infinite $X \subseteq
\N$ and suppose that for each $i<N$ and each $k \in G_i$, $X_{k,i}\subseteq X$
is such that $X \subseteq \bigcup\{X_{k,i}: k \in F\}$ for all $i<N$ and all
$F \subseteq G_i$ with $|F| \geq m$. Then, for each $i<N$, there is $H_i
\subseteq G_i$ with cardinality at least $A$ such that $\bigcap \{X_{k,i}:
i<N, k \in H_i\} \text{ is infinite.}$
\end{lema}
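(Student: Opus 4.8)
The plan is to reformulate the covering hypothesis as a bound on how often a point of $X$ can fail to be covered. First I would observe that for a fixed $i<N$ and a fixed $x\in X$, the set $B_{x,i}=\{k\in G_i: x\notin X_{k,i}\}$ must have fewer than $m$ elements. Indeed, if $|B_{x,i}|\geq m$, choose $F\subseteq B_{x,i}$ with $|F|=m$; then $x$ lies in $X$ but outside $\bigcup\{X_{k,i}:k\in F\}$, contradicting the assumption that $X\subseteq\bigcup\{X_{k,i}:k\in F\}$. Hence $|B_{x,i}|\leq m-1$ for every $x\in X$ and every $i<N$.

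Next I would run a pigeonhole argument on $X$. To each $x\in X$ associate the tuple $(B_{x,0},\dots,B_{x,N-1})$. Since each $G_i$ is finite, there are only finitely many possible tuples of this form, whereas $X$ is infinite; so there is an infinite set $X'\subseteq X$ and sets $B_i\subseteq G_i$ (for $i<N$) such that $B_{x,i}=B_i$ for all $x\in X'$ and all $i<N$.

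Finally I would set $H_i=G_i\setminus B_i$ for each $i<N$. Since $|G_i|\geq A+m$ and $|B_i|\leq m-1$, we obtain $|H_i|\geq A+1\geq A$. Moreover, for every $x\in X'$, every $i<N$ and every $k\in H_i$ we have $k\notin B_{x,i}$, i.e. $x\in X_{k,i}$; therefore $X'\subseteq\bigcap\{X_{k,i}:i<N,\ k\in H_i\}$, and this intersection is consequently infinite, as required.

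I do not expect a real obstacle here: the only point that needs a moment's care is the correct translation of the covering condition into the cardinality bound $|B_{x,i}|<m$, and once that is in place the finiteness of the sets $G_i$ makes the pigeonhole step completely routine.
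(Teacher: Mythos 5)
Your proof is correct, and it takes a mildly but genuinely different route from the paper's. Both arguments rest on the same key observation — that for each $i<N$ the covering hypothesis forbids more than $m-1$ of the sets $X_{k,i}$ from "failing" — but they extract the infinite intersection differently. The paper fixes a nonprincipal ultrafilter $u$ on $\N$ containing $X$, notes that at most $m-1$ of the $X_{k,i}$ ($k\in G_i$) can lie outside $u$ (else their union, which must contain $X$, would be outside $u$), sets $H_i=\{k\in G_i: X_{k,i}\in u\}$, and concludes that the finite intersection $\bigcap\{X_{k,i}: i<N,\ k\in H_i\}$ belongs to $u$ and is therefore infinite. You instead localize the failure at each point $x\in X$ via the sets $B_{x,i}$, bound their size by $m-1$ pointwise, and then stabilize the tuple $(B_{x,0},\dots,B_{x,N-1})$ on an infinite $X'\subseteq X$ by pigeonhole. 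Your version is more elementary (it avoids invoking the existence of a nonprincipal ultrafilter, needing only the infinite pigeonhole principle on the finitely many possible tuples), at the cost of an extra stabilization step; the ultrafilter version is shorter and chooses the $H_i$ uniformly rather than relative to a particular infinite subset. Both yield $|H_i|\geq |G_i|-(m-1)\geq A+1\geq A$, so either suffices for the applications in the paper.
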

\begin{proof}
Let $u$ be a nonprincipal ultrafilter in $\N$ which contains $X$. By the
hypothesis, for each $i<N$, there are at most $m-1$ elements $k \in G_i$ for
which $X_{k,i} \notin u$, since if they don't belong to $u$, their union does
not belong to $u$ as well. Taking $H_i=\{k \in G_i: X_{k,i} \in u\}$, we have
that $|H_i| \geq |G_i| - m \geq A$, which concludes the proof.
\end{proof}

\begin{lema}\label{combinatorio4}
Let $N$ be a natural number and for each $i<N$, let $G_i \subseteq \N$ be
finite. For each $k \in \bigcup_{l<N}G_l$ and each $i,j<N$, let
$(m^k_{i,j}(n): n \in G_i)$ be a sequence of positive real numbers such that
\[ \forall i,j<N, \quad \forall k \in \bigcup_{l<N}G_l, \quad \sum_{n \in G_i}
m_{i,j}^k(n) \leq M,\]
for a fixed positive real number $M$. Let $\eta$ be a positive real number and
let $m$ be a natural number such that $m \cdot \eta>M$. Given a natural number
$C$, there is a natural number $B$ depending on $C$, $N$ and $m$ such that if
$|G_i| \geq B$ for all $i<N$, then, for each $i<N$, there is $G_i^* \subseteq
G_i$ with cardinality $C$ such that for all $i,j<N$ with $i \neq j$, all $k
\in G_i^*$ and all $n \in G_j^*$, we have that $m^k_{i,j}(n) < \eta$.
\end{lema}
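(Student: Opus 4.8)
The plan is to combine a double-counting estimate with a greedy construction. For distinct $i,j<N$ and $k\in\bigcup_{l<N}G_l$, set $Z^k_{i,j}=\{n:m^k_{i,j}(n)\ge\eta\}$; since $\eta\cdot|Z^k_{i,j}|\le\sum_n m^k_{i,j}(n)\le M<m\eta$, we get $|Z^k_{i,j}|\le m-1$. If $m=1$ then each $m^k_{i,j}(n)$ is already $<\eta$, so any $C$-element subsets work and $B=C$ suffices; the case $N\le1$ is likewise trivial, so assume $m,N\ge 2$. With this notation the conclusion asks for sets $G_i^*\subseteq G_i$ of size $C$ such that, for all distinct $i,j$, no $n\in G_j^*$ lies in $Z^k_{i,j}$ for any $k\in G_i^*$. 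The hypothesis bounds, for a fixed source $k$, the number of bad targets $n\in Z^k_{i,j}$; what it does \emph{not} bound is the number of bad sources for a given target, that is, the size of the dual sets $D^n_{i,j}=\{k\in G_i:m^k_{i,j}(n)\ge\eta\}$, which is exactly what must be deleted when one protects elements already chosen. Handling this is the heart of the matter.

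First I would arrange that $|G_i|=B$ for every $i<N$: discarding elements of the $G_i$ only shrinks the sums $\sum_n m^k_{i,j}(n)$, so every hypothesis persists, and $B$ is still to be chosen. Then, for distinct $i,j$, counting the pairs $(n,k)$ with $k\in G_j$ and $m^k_{j,i}(n)\ge\eta$ in two ways gives $\sum_n|D^n_{j,i}|=\sum_{k\in G_j}|Z^k_{j,i}|\le(m-1)B$, so by Markov's inequality at most $B/(2(N-1))$ values of $n$ have $|D^n_{j,i}|\ge d$, where $d:=2(N-1)(m-1)$. Intersecting over the $N-1$ indices $j\ne i$ produces $\widehat{G}_i\subseteq G_i$ with $|\widehat{G}_i|\ge B/2$ on which $|D^n_{j,i}|\le d$ for every $j\ne i$; from now on all choices are made inside the $\widehat{G}_i$.

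Now I would build the $G_i^*$ greedily, adding one element at a time to some not-yet-full group, maintaining candidate sets $E_i\subseteq\widehat{G}_i$ (initially $\widehat{G}_i$) and chosen sets $S_i$ (initially empty) subject to: (a) for all distinct $i,j$, every $k\in S_i$ and $n\in S_j$ satisfy $m^k_{i,j}(n)<\eta$ and $m^n_{j,i}(k)<\eta$; and (b) the same with $E_i$ in place of $S_i$ (keeping $S_j$), so that any surviving candidate can be moved into $S_i$ without breaking (a). A step picks some $l$ with $|S_l|<C$ and some $k\in E_l$, transfers $k$ from $E_l$ to $S_l$, and then, for each $j\ne l$, deletes from $E_j$ the set $Z^k_{l,j}$ (at most $m-1$ elements) together with $D^k_{j,l}$ (at most $d$ elements, since $k\in\widehat{G}_l$); a direct check shows that (a) and (b) survive. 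When every $|S_i|$ equals $C$, invariant (a), as $(i,j)$ ranges over all ordered pairs of distinct indices, is exactly the assertion of the lemma.

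It remains to see that the process never stalls. There are $CN$ steps in all; a step whose active index is not $i$ deletes at most $(m-1)+d$ elements from $E_i$, and a step with active index $i$ deletes one, so each $E_i$ loses fewer than $CN(m+d)$ elements over the whole run. Hence if $B/2>CN(m+d)$ (for instance $B=2CN(m+2(N-1)(m-1))+2$, which depends only on $C$, $N$ and $m$), each $E_i$ stays nonempty while $|S_i|<C$, and the construction succeeds. I expect the delicate step to be the second paragraph: without first equalizing the sizes of the $G_i$ and passing to the subsets $\widehat{G}_i$ where the dual sets are small, the deletions $D^k_{j,l}$ performed in the greedy step cannot be bounded by any function of $C,N,m$ alone, and the count falls apart.
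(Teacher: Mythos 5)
Your argument is correct in substance but follows a genuinely different route from the paper's. The paper proves the lemma by induction on $N$: it first extracts $G_i^{**}\subseteq G_i$ of size $C'=C+Cm$ for $i<N$ from the inductive hypothesis, and only then selects $G_N^*$; since $K=\bigcup_{i<N}G_i^{**}$ has size at most $NC'$, bounded in terms of $C$, $N$ and $m$ alone, the choice of $G_N^*$ only has to avoid the bad targets of boundedly many sources (each contributing at most $m$ bad elements), and a final pruning of each $G_i^{**}$ by the at most $Cm$ bad targets of the elements of $G_N^*$ finishes the step. In other words, the paper sidesteps the dual-degree problem you correctly identify as the heart of the matter --- the hypothesis bounds only the out-degree of a source $k$, never the in-degree of a target $n$ --- by always quantifying over an already-fixed source set of bounded size, whereas you attack it head-on with a double-counting/Markov argument that prunes each $G_i$ to a half-sized $\widehat G_i$ on which all in-degrees are at most $d=2(N-1)(m-1)$, after which your greedy selection with bounded deletions succeeds; the invariants and the deletion count check out. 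A side benefit of your route is the polynomial bound $B=O(CN^2m)$, versus the exponential bound coming from the paper's recursion $C\mapsto C(1+m)$ at each level of the induction (irrelevant for the application, but a real difference). One caveat: your index bookkeeping is scrambled in places --- $Z^k_{i,j}$ is a subset of $G_i$, so ``no $n\in G_j^*$ lies in $Z^k_{i,j}$'' does not typecheck, and the sum $\sum_n|D^n_{j,i}|$ runs over $n\in G_j$ rather than over $G_i$, so the pruned set it produces lives in the wrong place as written. This largely inherits a typo in the lemma's own statement: the conclusion actually used in the proof of Proposition \ref{PassoIndutivo} is $m^k_{i,j}(n)<\eta$ for $n\in G_i^*$ and $k\in G_j^*$. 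Once the conventions are fixed consistently (for each ordered pair there is one bipartite conflict graph with sources of degree at most $m-1$, and each $G_i$ is the target side of exactly $N-1$ of them), your estimates go through unchanged.
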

\begin{proof}
We prove it by induction on $N$. If $N=1$, we are done.

So, assuming that the result holds for $N$, we prove it for $N+1$. Fix a
natural number $C$ and let $C'=C + C\cdot m$. By the inductive hypothesis,
there is $B(N, C')\in \N$ satisfying the lemma for $N$ and $C'$. We claim that
$B(N+1,C) = \max \{C +N^2\cdot C'\cdot m, B(N,C')\}$ works.

Since $B(N+1, C) \geq B(N,C')$, we have that for each $i<N$, there is
$G_i^{**}\subseteq G_i$ with $|G_i^{**}| =C'$ satisfying the thesis. Taking $K
= \bigcup\{G_i^{**}: i<N\}$, we have that $|K|\leq N \cdot C'$. For each $j<N$
and each $k \in K$, there at most $m$ elements $n$ of $G_N$ such that
$m_{N,j}^k(n) \geq \eta$. So, taking
$$G_N^*=\{n \in G_N:m_{N,j}^k(n)<\eta, \text{ for all }k \in K \text{ and all
}j<N \},$$
we have that $|G_N^*| \geq C$ and $m^k_{N,j}(n)<\eta$ for all $k \in K$, all
$j<N$ and all $n \in G_N^*$. We can suppose without loss of generality that
$|G_N^*|=C$. Since for each $i<N$ and each $k\in G_N^*$, there are at most $m$
elements $n$ of $G_i^{**}$ such that $m_{i,N}^k(n) \geq \eta$, taking
$$G_i^*=\{n \in G_i^{**}:m_{i,N}^k(n)<\eta, \text{ for all }k\in
G_N^* \},$$
we have that $|G_i^*| \geq C$ and for each $i,j<N$ with $i \neq j$, each $n
\in G_i^*$ and each $k \in G_j^*$ we have that $m^k_{i,j}(n) < \eta$, which
concludes the proof.
\end{proof}

\begin{prop}\label{PassoIndutivo}
Let $s \in \mathbb{S}^\kappa$, let $E \subseteq \N$ be finite and let
$\varepsilon$ and $M$ be positive real numbers. Let $(\dot{\mu}_k :k \in \N)$
be a sequence of names for finitely additive measures on the Boolean algebra
$\wp(\N)$, let $(\dot{A}_k : k \in \N)$ be a sequence of names for subsets of
$\N$ and let $\dot{X}$ be a name for a subset of $\N$. Suppose that
$$s \Vdash \left\{\begin{array}{l}
\forall k \in \N, \quad \Vert \dot{\mu}_k \Vert < \check{M},\\
\forall k, k'\in \N, \quad k \neq k',\quad \dot{A}_k \cap \dot{A}_{k'}
= \emptyset,\\
\dot{X} \text{ is infinite,}\\
\check{E} \cap \dot{X} = \emptyset,\\
\forall k \in \dot{X}, \quad |\dot{\mu}_k(\dot{A}_k)| \geq \check{\varepsilon}.
\end{array}\right.$$
Given a natural number $N$, a finite subset $F$ of $\kappa$ and a positive
real number $\delta$, there are: $s^* \in \mathbb{S}^\kappa$ with $s^*
\leq_{F,N} s$; $a^* \subseteq \N$; $E^* \subseteq \N$ with $|E^*|\leq
2^{N|F|}$; a sequence of names $(\dot{A}_k^*: k \in \N)$ for subsets of $\N$;
and a name $\dot{X}^*$ for a subset of $\N$ such that $s^*$ forces that:
\begin{enumerate}
\item for all $k \in \N$, $\dot{A}_k^* = \dot{A}_k \setminus \check{a}^*$ and
  so, for all $k, k'\in \N$, $k \neq k'$, $\dot{A}_k^* \cap \dot{A}_{k'}^* =
  \emptyset$;
\item $\dot{X}^* \subseteq \dot{X}$ and $\dot{X}^*$ is infinite;
\item for all $k \in \dot{X}^*$, $|\dot{\mu}_k|(\check{a}^*) \leq
 \check{\delta}$ and so, for all $k \in \dot{X}^*$,
 $|\dot{\mu}_k(\dot{A}_k^*)| \geq \check{\varepsilon} - \check{\delta}$;
\item for all $k \in \check{E}$, $|\dot{\mu}_k|(\check{a}^*) \leq
\check{\delta}$;
\item there is $k \in \check{E}^*$ such that $|\dot{\mu}_k(\check{a}^*)|\geq
 \check{\varepsilon}-\check{\delta}$;
\item $\check{E}^* \subseteq \dot{X} \setminus \dot{X}^*$.
\end{enumerate}
\end{prop}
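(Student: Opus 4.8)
The plan is to carry out a fusion-type construction over the Sacks product, using the combinatorial lemmas of this section to control the measures along the way. First I would work inside the model $V^{\mathbb{S}^\kappa}$ below $s$: the hypotheses say that $\dot X$ is an infinite subset of $\N$ disjoint from $E$, that the $\dot A_k$ are pairwise disjoint, that $\|\dot\mu_k\|<M$ for all $k$, and that $|\dot\mu_k(\dot A_k)|\geq\varepsilon$ for $k\in\dot X$. Since $\dot\mu_k$ is (a name for) a finitely additive measure of norm $<M$ and $\dot A_k$ is infinite-or-finite but $\dot A_k=\bigcup_{n}(\dot A_k\cap[0,n])$ with the tail variation $|\dot\mu_k|(\dot A_k\setminus[0,n])$ tending to $0$, for each $k\in\dot X$ there is a finite set $a_k\subseteq\dot A_k$ with $|\dot\mu_k(a_k)|\geq\varepsilon-\delta/2$ and simultaneously $|\dot\mu_k|(a_{k'})\leq$ small for the finitely many earlier indices; this is the step where one chooses the finite ``witnessing'' pieces. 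The key point is that all of this is forced, so by a fusion argument along a decreasing $\leq_{F,N}$-sequence (refining on more and more coordinates, i.e. building an $\leq_{F,N}$-decreasing sequence converging to some $s^*\leq_{F,N}s$) one can read off, for each element $\sigma\in l(F,N,s^*)$, a finite approximation to the relevant data below $s^*|\sigma$. Because $|l(F,N,s^*)|=2^{N|F|}$, this is exactly where the bound $|E^*|\leq 2^{N|F|}$ comes from: $E^*$ will be the (finite) set of indices $k$ that get picked out over the finitely many branches $\sigma$.

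Next I would use Lemma~\ref{combinatorio} and Lemma~\ref{combinatorio4} to merge the finitely many branch-wise choices into a single global choice. For each branch $\sigma\in l(F,N,s^*)$ we have from the previous paragraph a candidate finite set $a_\sigma\subseteq\N$ and an index $k_\sigma$ with (below $s^*|\sigma$) $|\dot\mu_{k_\sigma}(a_\sigma)|\geq\varepsilon-\delta/2$. Set $a^*=\bigcup_\sigma a_\sigma$ and $E^*=\{k_\sigma:\sigma\in l(F,N,s^*)\}$, so $|E^*|\leq 2^{N|F|}$ automatically. The delicate requirement is (3): we need that below $s^*$, for cofinally many $k$ in a suitable infinite $\dot X^*\subseteq\dot X$, the \emph{total} variation $|\dot\mu_k|(a^*)$ is still $\leq\delta$, even though $a^*$ is a union of pieces $a_\sigma$ coming from \emph{other} branches' measures. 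This is precisely the situation handled by Lemma~\ref{combinatorio4}: the quantities $m^k_{i,j}(n)=|\dot\mu_k|(\{n\})$-type contributions are positive reals with bounded sums ($\leq M$), and the lemma lets us thin each $G_i$ (here the role of $G_i$ is played by finite stretches of $\dot X$, coordinatized by the finitely many branches $i<N:=|l(F,N,s^*)|$) so that the cross-terms are all $<\eta$, with $\eta$ chosen so that $m\eta$ times the number of pieces is $<\delta$. Running this thinning while building the fusion gives the infinite name $\dot X^*$ with $|\dot\mu_k|(a^*)\leq\delta$ for $k\in\dot X^*$, and then $|\dot\mu_k(\dot A_k^*)|=|\dot\mu_k(\dot A_k)-\dot\mu_k(\dot A_k\cap a^*)|\geq\varepsilon-\delta$ since $\dot A_k^*=\dot A_k\setminus a^*$. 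Clause (4) is obtained the same way, throwing the fixed finite set $E$ in among the indices to be protected; clause (1) is immediate from the definition $\dot A_k^*=\dot A_k\setminus a^*$; clause (6), $E^*\subseteq\dot X\setminus\dot X^*$, is arranged by always choosing $k_\sigma\in\dot X$ and then deleting $E^*$ from $\dot X$ before thinning to get $\dot X^*$; and clause (5) is just the surviving witness $|\dot\mu_{k_\sigma}(a^*)|\geq|\dot\mu_{k_\sigma}(a_\sigma)|-|\dot\mu_{k_\sigma}|(a^*\setminus a_\sigma)\geq(\varepsilon-\delta/2)-\delta/2=\varepsilon-\delta$ for $k=k_\sigma$ on the branch where it was chosen, which forces the existential ``there is $k\in E^*$''.

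The main obstacle is the interaction between the fusion (which forces us to commit, at stage $N$, to finitely many branches and to refine only in the $\leq_{F,N}$ sense on the coordinates in $F$) and the need for the \emph{single} set $a^*$ and the \emph{single} tail $\dot X^*$ to work below all branches simultaneously. Concretely: each branch wants to pick its own finite witness, and a priori the witnesses chosen on different branches could force different measures to have large variation on the union, destroying (3). The resolution — and the reason Lemma~\ref{combinatorio4} is stated with the particular ``$m\cdot\eta>M$, thin to size $C$'' shape — is to choose the stretches of $\dot X$ over which we look to be very long (depending on $N=2^{N|F|}$, $m$, and the target $C$ governing how infinite $\dot X^*$ ends up), apply the lemma to guarantee that after thinning every cross-variation term is below $\eta$, and only then read off $a^*$. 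A secondary technical point is that everything must be done with names and forcing: one argues below a condition, extracts the finite data as a name, uses the product-Sacks fusion lemmas from \cite{BaumgartnerSacks} to get the limit condition $s^*\leq_{F,N}s$, and checks that $s^*$ forces each of (1)–(6); the combinatorics above is carried out in the ground model on the finitely many possible branch-values, which is legitimate because $l(F,N,s^*)$ is finite and fixed.
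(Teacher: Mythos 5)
Your overall architecture (decide finitely much data on each of the $2^{N|F|}$ branches $\sigma\in l(F,N,s)$, merge the branchwise choices via the combinatorial lemmas, fuse to a condition $s^*\leq_{F,N}s$, and let $E^*$ be the set of chosen indices $k_\sigma$) matches the paper's. But there is a genuine gap at your very first step: you propose to replace each $\dot A_k$ by a \emph{finite} witnessing piece $a_k\subseteq\dot A_k$ with $|\dot\mu_k(a_k)|\geq\varepsilon-\delta/2$, justified by the claim that the tail variation $|\dot\mu_k|(\dot A_k\setminus[0,n])$ tends to $0$. This is false for the measures in play: $\dot\mu_k$ is only finitely additive on $\wp(\N)$, not $\sigma$-additive, so its variation need not concentrate on any finite set. (Take $\mu_k$ to be the $\{0,1\}$-valued measure of a nonprincipal ultrafilter containing $A_k$: then $\mu_k(A_k)=1$ but $\mu_k(F)=0$ for every finite $F$, so no finite $a_k$ exists.) Since the non-$\sigma$-additive case is exactly the one relevant to the Grothendieck property, this step cannot be repaired by a routine approximation. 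The paper instead keeps the \emph{whole} sets: it decides, below each $s''|\sigma$, the values $A_n(\sigma)$ of the names $\dot A_n$ for $n$ ranging over a size-$B$ set $G_\sigma$ forced into $\dot X$, and takes $a^*=\bigcup_{\sigma}A_{k_\sigma}(\sigma)$ for suitably chosen $k_\sigma\in G_\sigma$.

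Relatedly, your mechanism for clause (3) is underpowered. You invoke Lemma~\ref{combinatorio4} with quantities of the form $|\dot\mu_k|(\{n\})$, which again presupposes that the mass sits on points; and Lemma~\ref{combinatorio4} only controls the finitely many cross-terms among already-chosen indices, so it cannot by itself produce an \emph{infinite} $\dot X^*$ on which $|\dot\mu_k|(\check a^*)\leq\delta$. The paper's actual mechanism is: since the sets $(A_n(\sigma))_{n\in G_\sigma}$ are pairwise disjoint and $\Vert\dot\mu_k\Vert<M<m\eta$, at most $m$ of them can have $|\dot\mu_k|(A_n(\sigma))\geq\eta$; hence the sets $\dot X_{n,\sigma}=\{k\in\dot X:|\dot\mu_k|(\check A_n(\sigma))<\eta\}$ cover $\dot X$ whenever $m$ of them are taken together, and Lemma~\ref{combinatorio} (the ultrafilter lemma), applied branch by branch along the fusion, extracts nonempty $H_\sigma\subseteq G_\sigma$ with $\bigcap\{\dot X_{n,\sigma}:n\in H_\sigma,\ \sigma\in L\}$ infinite; that intersection minus $E^*$ is $\dot X^*$. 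If you replace your finite-piece extraction by this decide-the-whole-name step and route clause (3) through Lemma~\ref{combinatorio} as above, the rest of your outline (clauses (1), (2), (4), (5), (6)) goes through essentially as you describe.
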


\begin{proof}
First, take $K=2^{N|F|}$ and $\eta=\frac{\delta}{K}$. Fix a natural number $m$
such that $m \cdot \eta > M$. Let $B \in \N$ be large enough (we need it large
enough in order to have a number greater than $1$ after several applications
of Lemmas \ref{combinatorio} and \ref{combinatorio4}).

We will define $a^*$ as the ``union of some $\dot{A}_k$'s'' with $k \in
\dot{X}$. To find the $k$'s that will work, we have to decide many (but
finitely many) elements of $\dot{X}^*$ and after that, we will eliminate those
which do not serve. So, take $L=l(F, N, s)$ and let
$$D=\{p \in \mathbb{S}^\kappa: \text{ there is } G \subseteq \N
\text{ with }|G|= B \text{ and such that } p \Vdash \check{G} \subseteq
\dot{X}\}.$$
Since $D$ is dense below $s$ and open, by Lemma 1.8 of \cite{BaumgartnerSacks}
there is $s'\leq_{F,N}s$ such that $s'|\sigma \in D$ for all $\sigma \in
L$. Hence for each $\sigma \in L$, there is $G_\sigma \subseteq \N$ with
cardinality $B$ such that $s'|\sigma$ forces that $\check{G}_\sigma \subseteq
\dot{X}$.

Now, we want to decide $\dot{A}_k$ for each $k \in G_\sigma$ and each $\sigma
\in L$. Let $G=\bigcup_{\sigma \in L} G_\sigma$ and
$$D'=\{p \in \mathbb{S}^\kappa: \text{ for all }k \in G \text{ there is
}A_k\subseteq \N \text{ such that } p \Vdash \check{A}_k =
\dot{A}_k\}.$$
Again, since $D'$ is dense below $s'$ and open, applying Lemma 1.8 of
\cite{BaumgartnerSacks} we obtain $s''\leq_{F,N}s'$ such that for each $\sigma
\in L$ we have $s''|\sigma \in D'$. Hence for each $\sigma \in L$ and each $k
\in G$, there is $A_k(\sigma) \subseteq \N$ such that $s''|\sigma$ forces that
$\check{A}_k(\sigma)=\dot{A}_k$ and therefore, for each $\sigma \in L$,
$(A_k(\sigma))_{k \in G}$ is pairwise disjoint.

Since we want the measures of $a^*$ to satisfy properties $(3)$, $(4)$ and
$(5)$, and the names $\dot{A}_n^*$ to satisfy properties $(1)$ and $(3)$, we
will approximate the values of the measures $\dot{\mu}_k$ for $k \in E\cup G$
in the sets $A_n(\sigma)$ for $\sigma \in L$ and $n \in G_\sigma$.
\vskip 5pt
\noindent {\bf Claim 1.} \emph{For each $\sigma, \sigma' \in L$, each $k \in
  E\cup G$ and each $n \in G_{\sigma}$, there is $m^k_{\sigma,\sigma'}(n) \in
  \mathbb{R}$ and there is $t \leq_{F,N} s''$ such that for all $\sigma,
  \sigma'\in L$, all $k \in E\cup G$ and all $n \in G_\sigma$, $t|\sigma$
  forces that $|\dot{\mu}_k|(\check{A}_n(\sigma')) \leq
  \check{m}^k_{\sigma,\sigma'}(n)$ and such that for all $\sigma, \sigma'\in
  L$ and all $k \in E\cup G$,
$$\sum_{n \in G_{\sigma}}m^k_{\sigma,\sigma'}(n) < M.$$}
\vskip 5pt
\noindent \emph{Proof of Claim 1.}
Since $E\cup G$ is finite and $s$ forces that
$\Vert\dot{\mu}_k\Vert<\check{M}$ for all $k \in \check{E}\cup \check{G}$, it
forces also that there is $\theta >0$ such that $\check{M}-\Vert \dot{\mu}_k
\Vert \geq \theta$ for all $k \in \check{E}\cup \check{G}$. Thus, there is $p
\leq_{F,N}s''$ such that $p$ forces that $\check{M} - \Vert \dot{\mu}_k \Vert
\geq \check{\theta}$ for all $k \in E\cup G$.

There is $t \leq_{F,N} p$ and for each $\sigma, \sigma' \in L$, each $n \in
G_{\sigma}$ and each $k \in E\cup G$, there is $m^k_{\sigma,\sigma'}(n) \in
\mathbb{R}$ such that
$$t|\sigma \Vdash 0 \leq
\check{m}^k_{\sigma,\sigma'}(n)-|\dot{\mu}_k|(\check{A}_n(\sigma')) <
\frac{\check{\theta}}{2|G_\sigma|}.$$
Recall that for each $\sigma \in L$, $(A_k(\sigma))_{k \in G}$ is pairwise
disjoint. Therefore, for all $\sigma,\sigma'\in L$ and all $k \in \check{E}\cup
 \check{G}$, $t|\sigma$ forces that
$$\check{M}- \sum_{n \in G_\sigma}
\check{m}^k_{\sigma,\sigma'}(n) \geq \check{M}-
\sum_{n \in G_\sigma}(|\check{\mu}_k|(\check{A}_n(\sigma')) -
\frac{\check{\theta}}{2|G_\sigma|}) \geq \check{M} - \Vert \check{\mu}_k
\Vert - \frac{\check{\theta}}{2} \geq \frac{\check{\theta}}{2}>0$$
and so, $\sum_{n \in G_\sigma} m^k_{\sigma,\sigma'}(n) < M$. Moreover, for
each $n \in G_{\sigma}$, each $\sigma,\sigma'\in L$ and each $k \in
\check{E}\cup \check{G}$, $t|\sigma$ forces that
$|\dot{\mu}_k|(\check{A}_n(\sigma')) \leq \check{m}^k_{\sigma,\sigma'}(n)$,
concluding the proof of the claim. $\square$

From the fact that for each $\sigma, \sigma'\in L$ and each $k \in E$,
$\sum_{n \in G_{\sigma}}m^k_{\sigma,\sigma'}(n) < M$, it follows that for each
$\sigma, \sigma'\in L$ and each $k \in E$ there are at most $m$ elements $n$
of $G_\sigma$ such that $m^k_{\sigma,\sigma'}(n) \geq \eta$. Since $B$ is
large enough, we can assume without loss of generality that
\[\forall \sigma,\sigma' \in L, \quad \forall k \in E, \quad \forall n 
\in G_\sigma, \quad m^k_{\sigma,\sigma'}(n) <\eta,\eqno{(7)}\]
and by Lemma \ref{combinatorio4}, we can assume without
loss of generality that
\[\forall \sigma, \sigma'\in L, \quad \sigma \neq \sigma', \quad \forall n \in
G_{\sigma}, \quad \forall k \in G_{\sigma'}, \quad m^k_{\sigma,\sigma'}(n) <
\eta.\eqno{(8)}\]

To obtain $\dot{X}^*$ satisfying $(2)$, for each $\sigma
\in L$ and each $n \in G_\sigma$, let $\dot{X}_{n,\sigma}$ be a name 
such that $t$ forces that $\dot{X}_{n,\sigma}=\{k \in \dot{X}: |\dot{\mu}_k|
(\check{A}_n(\sigma)) < \eta\}$.
\vskip 5pt
\noindent {\bf Claim 2.} \emph{ There is $s^* \leq_{F,N} t$ and for
  each $\sigma \in L$ there is a nonempty $H_\sigma \subseteq G_\sigma$ such
  that $s^*$ forces that $\bigcap\{\dot{X}_{n,\sigma} : n \in
\check{H}_{\sigma}, \sigma \in L\} \text{ is infinite.}$}
\vskip 5pt
\noindent \emph{Proof of Claim 2.} Let $(\sigma_i :i<K)$ be an enumeration of
$L$. To prove the claim, we will proceed by
induction on $j<K$ to construct a sequence of conditions $s^j$ such that
$s^{j+1} \leq_{F,N} s^j \leq_{F,N} t$ and for each $\sigma \in L$, we
construct a sequence of finite nonempty sets $H_{\sigma,j}$ with
$H_{\sigma,j+1} \subseteq H_{\sigma,j} \subseteq G_{\sigma}$ such that
$s^{j}|\sigma_j$ forces that $\bigcap\{\dot{X}_{n,\sigma} : n \in
\check{H}_{\sigma,j}, \sigma \in L \} \text{ is infinite.}$

For the construction, fix $0\leq j < K$ and suppose we already have $s^j$ and
$H_{\sigma,j}$ for all $\sigma \in L$ as wanted. We have that $s^j$ forces that
$\dot{X} \subseteq \bigcup\{\dot{X}_{n,\sigma}:n \in H\}$ for all $\sigma \in
L$ and all $H \subseteq H_{\sigma,j}$ with $|H|\geq m$, for if not, then there
would be $\sigma \in L$, $k \in \N$, $H \subseteq H_{\sigma,j}$ with $|H|
\geq m$ and $(s^j)'\leq s^j$ such that $(s^j)'$ forces that $k \in \dot{X}
\setminus \bigcup\{ \dot{X}_{n,\sigma}: n \in H\}$. Since for each $\sigma \in
L$, $(A_n(\sigma))_{n \in H}$ are pairwise disjoint, we would have that
$(s^j)'$ forces that $|\dot{\mu}_k| (\bigcup\{ \check{A}_n(\sigma):n \in H\})
\geq \sum_{n \in H}|\dot{\mu}_k| (\check{A}_n(\sigma)) \geq \check{m} \cdot
\check{\eta} > \check{M}$, contradicting our hypothesis. We apply Lemma
\ref{combinatorio} in $V[G]$ and using Lemma 1.8 of \cite{BaumgartnerSacks} we
have that there is $s^{j+1} \leq_{F,N} s^j$ and for each $\sigma \in L$ there
is $H_{\sigma,j+1} \subseteq H_{\sigma,j}$, such that $s^{j+1}|\sigma_{j+1}$
forces that $\bigcap \{\dot{X}_{n,\sigma}: \sigma \in L, n \in
\check{H}_{\sigma,j+1} \} \text{ is infinite.}$
Since $C$ is large enough, we can assume each $H_{\sigma,K-1}$ to be nonempty
and taking $s^*=s^{K-1}$ and $H_\sigma = H_{\sigma,K-1}$ we conclude the proof
of the claim. $\square$

For each $\sigma \in L$ we take $k_\sigma \in H_{\sigma}$ of Claim 2. We
define $a^* = \bigcup_{\sigma \in L} A_{k_\sigma}(\sigma)$ and $E^*
=\{k_\sigma : \sigma\in L\}$. For each $k \in \N$, we define $\dot{A}_k^*$
names for $\dot{A}_k \setminus a^*$ and $\dot{X}^*$ a name such that 
$$s^*\Vdash \dot{X}^* = \bigcap \{\dot{X}_{k,\sigma}:k
\in \check{H}_{\sigma},\sigma \in L\} \setminus \check{E}^*.$$
\vskip 5pt
\noindent {\bf Claim 3.}
$s^*$ forces that for all $k \in \bigcap\{\dot{X}_{n,\sigma'} : n \in
\check{H}_{\sigma'}, \sigma' \in L\}$ and all $\sigma \in L$, 
$|\dot{\mu}_k|(\check{A}_{k_\sigma}(\sigma)) < \check{\eta}$.
\vskip 5pt
\noindent \emph{Proof of Claim 3.} Suppose that $s^*$ forces that $k \in
\dot{X}_{n,\sigma}$ for each $n \in H_\sigma$ and each $\sigma \in L$. Then,
by the definition of $\dot{X}_{n,\sigma}$ it means that $s^*$ forces that
$|\dot{\mu}_k|(\check{A}_n(\sigma))<\check{\eta}$ for each $n \in H_\sigma$
and each $\sigma \in L$. Since each $k_\sigma \in H_{\sigma}$, we conclude the
claim. $\square$

Let us now verify that we have everything we wanted: first, note that by the
definition of $\dot{A}_k^*$ we have that $s^*$ forces that $\dot{A}_k^*=
\dot{A}_k \setminus \check{a}^*$ and $(\dot{A}_k^*)_{k \in
  \N}$ are pairwise disjoint, since it forces that
$(\dot{A}_k)_{k \in \N}$ are pairwise disjoint. Therefore we obtain
$(1)$.

By the definition of $\dot{X}^*$ we have that $s^*$ forces that $\dot{X}^*
\subseteq \dot{X}$. By Claim 2 we have that $s^*$ forces that $\dot{X}^*$ is
infinite, since $E^*$ is finite. So we obtain $(2)$. 

By Claim 3 we have that
$$s^* \Vdash \forall k \in \dot{X}^*, \quad
|\dot{\mu}_k|(\check{a}^*) = |\dot{\mu}_k|(\bigcup_{\sigma \in L}
\check{A}_{k_\sigma}(\sigma)) \leq \sum_{\sigma \in L} |\dot{\mu}_k|
(\check{A}_{k_\sigma}(\sigma)) \leq \check{K} \cdot \check{\eta} \leq 
\check{\delta}.$$
By the hypothesis of the proposition, $s$ forces that
$|\dot{\mu}_k(\dot{A}_k)| \geq \check{\varepsilon}$ for each $k \in
\dot{X}$. To obtain $(3)$, note that
$$s^* \Vdash \forall k \in \dot{X}^*, \quad |\dot{\mu}_k(\dot{A}_k^*)| \geq
|\dot{\mu}_k(\dot{A}_k)| - |\dot{\mu}_k|(\check{a}^*) \geq \check{\varepsilon}
- \check{\delta}.$$

To verify $(4)$, note that Claim 1 and $(7)$ imply that for all $\sigma,
\sigma'\in L$, all $k \in E$ and all $n \in G_\sigma$
$$s^*|\sigma \Vdash |\dot{\mu}_k|(\check{A}_n(\sigma')) \leq
\check{m}^k_{\sigma,\sigma'}(n) < \check{\eta}.$$
Since for all $\sigma \in L$, $k_\sigma \in H_\sigma \subseteq G_\sigma$, then,
for each $\sigma \in L$ and each $k \in E$ we have that
$$s^*|\sigma \Vdash |\dot{\mu}_k|(\check{a}^*) =
|\dot{\mu}_k|(\bigcup_{\sigma' \in L} \check{A}_{k_{\sigma'}}(\sigma')) \leq
\sum_{\sigma' \in L} |\dot{\mu}_k|(\check{A}_{k_{\sigma'}}(\sigma')) \leq
\check{K} \cdot \check{\eta} \leq \check{\delta},$$
and so, by Lemma 1.9 of \cite{BaumgartnerSacks} we obtain $(4)$.

To verify $(5)$, note that Claim 1 and $(8)$ imply that for all $\sigma,
\sigma'\in L$ with $\sigma \neq \sigma'$, all $n \in G_\sigma$ and all $k \in
G_\sigma'$
$$s^*|\sigma \Vdash |\dot{\mu}_k|(\check{A}_n(\sigma')) \leq
\check{m}^k_{\sigma,\sigma'}(n) < \check{\eta}.$$
Since for all $\sigma \in L$, $k_\sigma \in H_\sigma \subseteq G_\sigma$, we
have that for each $\sigma \in L$,
$$s^*|\sigma \Vdash |\dot{\mu}_{k_\sigma}(a^*)| \geq
|\dot{\mu}_{k_\sigma}(\check{A}_{k_\sigma}(\sigma))| - \sum_{\sigma'\in L,
  \sigma'\neq \sigma}|\dot{\mu}_{k_\sigma}|(\check{A}_{k_{\sigma'}}(\sigma')) 
   > \check{\varepsilon} - \check{K}\cdot\check{\eta} = 
    \check{\varepsilon} - \check{\delta},
$$
and again by Lemma 1.9 of \cite{BaumgartnerSacks} we obtain $(5)$.

By the definition of $E^*$ and that of $\dot{X}^*$ we have $(6)$. 
\end{proof}

\section{The proof of the main theorem}

We show now how the main result follows from Proposition \ref{PassoIndutivo}.

\begin{teo}
Let $\kappa>\omega_1$ be a regular cardinal. Let $G$ be an
$\mathbb{S}^\kappa$-generic filter over a set-theoretic universe $V$ where CH
holds. In $V[G]$, if $K$ is the Stone space of the Boolean algebra $\wp(N)
\cap V$, then $C(K)$ is a Banach space with the Grothendieck property and
density $\omega_1$ which is less than $\kappa=\cc$.
\end{teo}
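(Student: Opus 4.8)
The plan is to verify the three stated properties of $C(K)$ in $V[G]$, where $K = S(\wp(\N)\cap V)$. First, the density: since CH holds in $V$, the Boolean algebra $\wp(\N)\cap V$ has cardinality $\omega_1$ in $V$, hence $K$ has weight $\omega_1$ in $V[G]$ and $C(K)$ has density $\omega_1$ by the Stone--Weierstrass Theorem. That $\kappa = \cc$ in $V[G]$ is a standard fact about the product Sacks forcing $\mathbb{S}^\kappa$ (it is proper, has the Sacks property, adds $\kappa$ reals, and collapses no cardinals), so $\omega_1 < \kappa = \cc$ as claimed; here I would cite \cite{BaumgartnerSacks}.

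The heart of the matter is the Grothendieck property. I would argue by contradiction: suppose in $V[G]$ there is a weak$^*$-null sequence $(\mu_k : k \in \N)$ in $C(K)^* = C(S(\wp(\N)\cap V))^*$ which is not weakly null. After normalizing I may assume $\|\mu_k\| < M$ for all $k$ and, by passing to a subsequence and using a standard gliding-hump/Rosenthal-type argument together with Lemma~\ref{combRosenthal}, I may assume there is an antichain $(a_n : n \in \N)$ in the algebra and an $\varepsilon > 0$ so that, along a suitable reindexing, $|\mu_k(a_k)| \geq \varepsilon$ for all $k$, while for every infinite $M\subseteq\N$ for which $\bigvee_{n\in M}a_n$ exists one has $\mu_k(\bigvee_{n\in M}a_n) = \sum_{n\in M}\mu_k(a_n)$ for all $k$. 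Crucially, in $V[G]$ the algebra $\wp(\N)\cap V$ is such that the suprema $\bigvee_{n\in M}a_n$ computed there behave like genuine countable unions only along sets $M$ lying in $V$; this is where the structure of the ground-model algebra, reflected through the Sacks product, comes into play.

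The key step is then to run the inductive construction encapsulated in Proposition~\ref{PassoIndutivo}. Working with names $\dot{\mu}_k$, $\dot{A}_k$ (names for witnesses $a_{n}$ restricted so as to be pairwise disjoint) and $\dot{X}$ (a name for the index set where the large-measure condition holds), I would iterate the Proposition $\omega$ many times along a fusion sequence: at stage $i$ I apply it with $F = F_i$ an increasing sequence of finite subsets of $\kappa$ exhausting the relevant coordinates, $N = N_i \to \infty$, and $\delta$ summable, obtaining conditions $s_i \geq_{F_i,N_i} s_{i+1}$, finite sets $E_i^*$ and sets $a_i^*$. The fusion $s^* = \bigwedge_i s_i \in \mathbb{S}^\kappa$ then forces that the sets $a_i^*$ are "almost disjoint with uniformly small measures," and that the union $a = \bigvee_i a_i^*$ — which exists in $\wp(\N)\cap V$ because the $a_i^*$ are decided by the fusion to be a set coded in $V$ — satisfies $|\mu_k(a)| \geq \varepsilon/2$ along an infinite set of $k$'s (from clause (5)) while simultaneously $\mu_k(a) = \sum_i \mu_k(a_i^*)$ is small (from clauses (3),(4) and countable additivity along the ground-model set $M$). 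This contradiction shows no such sequence $(\mu_k)$ exists, so $C(K)$ has the Grothendieck property.

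**Main obstacle.** The delicate point is the bookkeeping of the fusion: I must arrange that the coordinates $F_i$ and levels $N_i$ are chosen so that (a) every name $\dot\mu_k$, $\dot A_k$, $\dot X^*$ produced at stage $i$ depends only on coordinates already handled, so that the fusion genuinely decides the set $a = \bigvee_i a_i^*$ as an element of $V$, and (b) the error bounds $\delta_i$ sum to less than $\varepsilon/2$ while the sizes $|E_i^*| \leq 2^{N_i|F_i|}$ stay controlled. Getting clauses (5) and (6) to interact correctly across stages — so that the "new" witnesses $E_i^*\subseteq \dot X\setminus\dot X^*$ at stage $i$ do not get re-used and genuinely accumulate to force $|\mu_k(a)|$ large for infinitely many $k$ — is the step I expect to require the most care, and it is exactly what the somewhat elaborate conclusion of Proposition~\ref{PassoIndutivo} has been engineered to make possible.
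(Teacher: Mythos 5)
Your overall architecture matches the paper's: density via CH in $V$ plus the cardinal arithmetic of $\mathbb{S}^\kappa$; extraction of a disjoint sequence of clopen sets with $|\mu_{n_k}(A_k)|\geq\varepsilon$ (the paper does this via Eberlein--\v Smulian and the Dieudonn\'e--Grothendieck theorem, which is what your ``Rosenthal-type argument'' amounts to); and then an $\omega$-length fusion iterating Proposition~\ref{PassoIndutivo} to produce the disjoint ground-model sets $a_N$ and witnesses $E_N$. However, there is a genuine gap in how you close the argument, and it concerns exactly the point where you invoke ``countable additivity along the ground-model set.'' A finitely additive measure on $\wp(\N)\cap V$ obtained by restricting a Radon measure on the Stone space is \emph{not} countably additive in the sense you need: for a disjoint sequence $(a_N)$ in the algebra, the supremum $\bigvee_N a_N$ corresponds to the \emph{closure} of $\bigcup_N \widehat{a}_N$ in $K$, whose boundary can carry positive measure, so $\mu_k(\bigvee_N a_N)=\sum_N\mu_k(a_N)$ can fail even though every $a_N$ lies in $V$. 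Consequently your key estimate $|\mu_k(a)|\geq|\mu_k(a_i)|-\sum_{N\neq i}|\mu_k|(a_N)$ is unjustified for $a=\bigvee_N a_N$. Moreover, the property you claim to arrange at the outset --- that $\mu_k(\bigvee_{n\in M}a_n)=\sum_{n\in M}\mu_k(a_n)$ for \emph{every} infinite $M$ --- is not what Lemma~\ref{combRosenthal} provides and is not attainable in general; the lemma only gives this for all $M$ contained in $N_\xi$ for all but countably many members $N_\xi$ of an almost disjoint family.

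The missing idea is therefore the paper's final move: after the fusion has produced the antichain $(a_N)_{N\in\N}$ (not before, and not for the original antichain, since the relevant disjoint sets are the ones manufactured by the induction), fix in $V$ an almost disjoint family $(K_\alpha)_{\alpha<\omega_1}$, note that it remains an uncountable almost disjoint family in $V[G]$ because $\omega_1$ is preserved, and apply Lemma~\ref{combRosenthal} in $V[G]$ to the antichain $(a_N)$ and the countably many measures $\mu_k$ to find a single $\alpha$ with $\mu_k(\bigvee_{N\in K_\alpha}a_N)=\sum_{N\in K_\alpha}\mu_k(a_N)$ for all $k$. Only then does $a=\bigvee_{N\in K_\alpha}a_N$ yield, via clauses $(3)$--$(6)$ and the summable $\delta_N$, that $|\mu_k(a)|\geq 2\delta$ for some $k\in E_i$ whenever $i\in K_\alpha$ while $|\mu_k(a)|\leq\delta$ for all $k\in E_i$ when $i\notin K_\alpha$, so that $\mu_k(\chi_a)$ fails to converge. (Your sketch also blurs this last step: for the same set $a$ you assert both that $|\mu_k(a)|$ is large on an infinite set and that $\mu_k(a)$ is small; the point is that these happen for the two disjoint infinite families of indices $\bigcup_{i\in K_\alpha}E_i$ and $\bigcup_{i\notin K_\alpha}E_i$. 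If you do normalize to a weak$^*$-null sequence, the large values alone suffice, but you still cannot obtain them without first securing the additivity along $K_\alpha$.)
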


\begin{proof}
First we work in $V[G]$. By Theorem 1.11 of \cite{BaumgartnerSacks},
$|\wp(\N) \cap V|= \omega_1$ and by Theorem 1.14 of \cite{BaumgartnerSacks},
$\omega_1<\kappa=\cc$. So, $K$ has weight $\omega_1$ and $C(K)$ has density
$\omega_1$, which is less than $\cc$.

Now suppose that $C(K)$ is not a Grothendieck space. Then there is a sequence
$(\mu_k)_{k \in \N} \subseteq C(K)^*$ which is weak$^*$ convergent to
$\mu \in C(K)^*$ but does not converge weakly. If $\{\mu_k: k \in \N\}$ were 
weakly compact, by the
  Eberlein-\v Smulian Theorem, it would be sequentially weakly
  compact. Then there would be infinite and disjoint sets $M_1, M_2 \subseteq
  \N$ such that $(\mu_k)_{k \in M_i}$ is weakly convergent to $\nu_i$ for
  $i=1,2$, and $\nu_1 \neq \nu_2$. Since weak convergence implies weak$^*$
  convergence, we would have that $(\mu_k)_{k \in M_i}$ converges weakly$^*$
  to $\nu_i$ for $i=1,2$, a contradiction. 

So, we can assume that $\{\mu_k: k \in \N\}$ is not weakly compact. By the
Uniform Boundedness Principle, $(\mu_k)_{k \in \N}$ is a bounded sequence. By
the Dieudonn\'e-Grothendieck Theorem (Theorem VII.14 in \cite{Diestel}), there
is a pairwise disjoint sequence $(U_k)_{k \in \N}$ of open subsets of $K$ and
$\varepsilon >0$ such that for all $k_0 \in \N$ there is $k \geq k_0$ and $n_k
\in \N$ such that $|\mu_{n_k}(U_k)| \geq \varepsilon$. Since $K$ is a Boolean
space, using the regularity of each $\mu_k$, we can assume without loss of
generality that $U_k =B_k$ for some $B_k \in \mathcal{B}$. Moreover, if for
some $k \in \N$ we have that $|\mu_k(B_{k_i})|\geq \varepsilon$ for some
sequence $(k_i)_{i \in \N} \subseteq \N$, it follows that $|\mu_k|(\bigcup_{i
  \in \N}B_{k_i}) \geq \sum_{i \in \N}|\mu_k(B_{k_i})| = \infty$,
contradicting the fact that $\mu_k$ is bounded. So, let $i_0 \in \N$ and $n_0
\in \N$ be such that $|\mu_{n_0}(B_{i_0})|\geq \varepsilon$ and we construct
by induction $i_{k+1} > i_k$ and $n_{k+1} > n_k$ such that
$|\mu_{n_k}(B_{i_k})| \geq \varepsilon$. Let $A_k=B_{i_k}$ and we have that
$(A_k)_{k \in \N}\subseteq \wp(\N)\cap V$ are pairwise disjoint and $(n_k)_{k
  \in \N} \subseteq \N$ is an increasing sequence such that for all $k \in
\N$, $|\mu_{n_k}(A_k)|\geq \varepsilon$.

Working now in $V$, let $\dot{\mu}_k$ be a name for the restriction of
$\mu_{n_k}$ to the Boolean algebra $\wp(\N)\cap V$. Let $s \in
\mathbb{S}^\kappa$, let $M$ and $\varepsilon$ be positive real numbers and let
$\dot{A}_k$ be names for the elements of $\wp(\N)\cap V$ such that
$$s \Vdash \left\{ \begin{array}{l}
\forall k \in \N, \quad \Vert \dot{\mu}_k \Vert \leq \check{M},\\
\forall k, k' \in \N, \quad k \neq k', \quad \dot{A}_k \cap \dot{A}_{k'}= 
\emptyset,\\
\forall k \in \N, \quad |\dot{\mu_k}(\dot{A}_k)| \geq \check{\varepsilon}.
\end{array}\right.$$

By induction, we will construct a sequence $(s_N)_{N
  \in \N}$ with $s_{N+1}\leq_{F_N,N} s_N$ where $F_N =\{\alpha_i^k : i,k
<N\}$ and $supp(s_N)= \{\alpha^N_k: k \in \N\}$, a pairwise disjoint
  sequence $(a_N)_{N \in \N}$ in $\wp(\N)$, a pairwise
  disjoint sequence $(E_N)_{N \in \N}$ of finite subsets of
  $\N$, sequences of names $(\dot{A}^N_k)_{k \in \N, N \in \N}$ for
  subsets of $\N$ and a sequence of names $(\dot{X}_N)_{N \in \N}$ for
  subsets of $\N$ such that $s_{N+1}$ forces that
\begin{enumerate}
\item for all $k \in \N$, $\dot{A}_k^{N+1} = \dot{A}_k^N \setminus
\check{a}_{N+1}$ and so, for all $k, k'\in \N$ if $k \neq k'$,
then $\dot{A}_k^{N+1} \cap \dot{A}_{k'}^{N+1} = \emptyset$;
\item $\dot{X}_{N+1} \subseteq \dot{X}_N$ and $\dot{X}_{N+1}$ is infinite;
\item for all $k \in \dot{X}_{N+1}$, $|\dot{\mu}_k|(\check{a}_{N+1}) \leq
 \check{\delta}_{N}$ and so, for all $k \in \dot{X}_{N+1}$,
$|\dot{\mu}_k(\dot{A}_k^{N+1})| \geq \check{\varepsilon}_{N} -
 \check{\delta}_N$;
\item for all $k \in \bigcup_{0 \leq i \leq N}\check{E}_i$, 
$|\dot{\mu}_k|(\check{a}_{N+1}) \leq \check{\delta}_N$;
\item there is $k \in \check{E}_{N+1}$ such that
  $|\dot{\mu}_k(\check{a}_{N+1})|\geq  \check{\varepsilon}_N-\check{\delta}_N$;
\item $\check{E}_{N+1} \subseteq \dot{X}_N \setminus \dot{X}_{N+1}$;
\end{enumerate}
where $\varepsilon_0=\varepsilon$, $\delta_0=\frac{\varepsilon}{2^3}$ and for
each $N\in \N$, $\varepsilon_{N+1}=\varepsilon_N - \delta_N$ and
$\delta_{N+1}= \frac{\delta_N}{2}$.

For the construction, let $s_0=s$, $E_0= \emptyset$, $\dot{A}^0_k =
\dot{A}_k$, $\dot{X}_0= \check{\N}$ and note that we have the hypothesis of
Proposition \ref{PassoIndutivo}.

Now suppose we already have $s_0, \dots, s_N$, $E_0, \dots, E_N$, 
$(\dot{A}^N_k)_{k \in \N}$, $\dot{X}_0, \dots, \dot{X}_N$, and
$a_1, \dots,$ $a_N$ as we want. Note that $(1), (2), (3)$ and $(6)$
guarantee that the hypothesis of Proposition
\ref{PassoIndutivo} is satisfied. Then there are $s_{N+1} \leq_{F_N, N} s_N$,
$a_{N+1} \subseteq \N$, $E_{N+1} \subseteq \N$, a sequence of names 
$(\dot{A}^{N+1}_k)_{k \in \N}$ for subsets of $\N$ and a name 
$\dot{X}_{N+1}$ for a subset of $\N$ satisfying $(1)-(6)$. This concludes the
construction of the sequences.

Then, by Lemma 1.6 of \cite{BaumgartnerSacks} there is $s^*\in
\mathbb{S}^\kappa$ such that $s^*\leq s_N$ for all $N \in \N$. We have
that $(1)$ guarantees that $(a_N)_{N \in \N}$ are pairwise disjoint. Moreover,
$(4)$ guarantees that
$$s^* \Vdash \forall N \in \N, \ \text{if } i<N \text{ and } k \in
\check{E}_i, \text{ then } |\dot{\mu}_k|(\check{a}_{N}) \leq
\frac{\check{\varepsilon}}{2^{N+3}}. \eqno{(9)}$$
On the other hand, using $(3)$ and $(6)$ we conclude that
$$s^* \Vdash \forall N \in \N, \ \text{if } i>N \text{ and } k \in
\check{E}_i, \text{ then } |\dot{\mu}_k|(\check{a}_{N}) \leq
\frac{\check{\varepsilon}}{2^{N+3}}. \eqno{(10)}$$
And finally we have that $(5)$ guarantees that
$$s^* \Vdash \forall N \in \N, \ \exists k \in \check{E}_N, \
|\dot{\mu}_k(\check{a}_{N})| \geq \frac{3\check{\varepsilon}}{4}.\eqno{(11)}$$

Let $(K_\alpha)_{\alpha<\omega_1}
\subseteq \wp(\N)$ be an almost disjoint family. For each $\alpha <
\omega_1$ we have that
$$s^* \Vdash \bigvee_{N \in \check{K}_\alpha}\check{a}_N \in
\wp(\N)\cap V.$$
By Theorem 1.11 of \cite{BaumgartnerSacks}, $s^*$ forces that
  $(\check{K}_\alpha)_{\alpha<\check{\omega}_1}$ is an almost disjoint family
  of subsets of $\N$ and $\check{\omega}_1=\omega_1$. By Lemma
  \ref{combRosenthal} applied in $V[G]$, we obtain $s^{**} \leq  s^*$ and
  $\alpha \in \omega_1$ such that
$$s^{**} \Vdash \forall k \in \N \quad \dot{\mu}_k(\bigvee_{N \in
  \check{K}_\alpha}\check{a}_N) = \sum_{N \in \check{K}_\alpha}
  \dot{\mu}_k(\check{a}_N).$$

Take $a=\bigvee_{N \in K_\alpha}a_N$
and let us see that in $V[G]$, if $\delta =\frac{\varepsilon}{4}$, then there
are infinitely many $n \in \N$ such that $|\mu_n(a)| \geq 2 \delta$ and
infinitely many $l \in \N$ such that $|\mu_l(a)| \leq \delta$.If $i \in
K_\alpha$, using $(9)$, $(10)$ and $(11)$, we have that $s^{**}$ forces that
there is $k \in \check{E}_i$ such that
$$ |\dot{\mu}_k(\check{a})| =|\sum_{N \in \check{K}_\alpha}
\dot{\mu}_k(\check{a}_N)| \geq |\dot{\mu}_k(\check{a}_i)| - \sum_{N \in
  \check{K}_\alpha \setminus \{i\}} |\dot{\mu}_k|(\check{a}_N)
 \geq \frac{3\check{\varepsilon}}{4} - \frac{\check{\varepsilon}}{4} =
 2\check{\delta}.$$
On the other hand, if $i \notin K_\alpha$, then
$$s^{**} \Vdash \forall k \in \check{E}_i \quad 
|\dot{\mu}_k(\check{a})| =|\sum_{N \in \check{K}_\alpha}
\dot{\mu}_k(\check{a}_N)| \leq \sum_{N \in \check{K}_\alpha}
\frac{\check{\varepsilon}}{2^{N+3}} \leq \frac{\check{\varepsilon}}{4} =
\check{\delta}.$$

Since $a \in \wp(\N)\cap V$ which is identified with $Clop(K)$, we have that
$\chi_a \in C(K)$ and therefore $(\mu_k)_{k \in \N}$ does not
converge weakly$^*$, contradicting our hypothesis and concluding the proof.
\end{proof}

\bibliographystyle{abbrv}

\end{document}